\theoremstyle{definition}
\newtheorem{theo}{Theorem}
\newtheorem{lemm}[theo]{Lemma}
\newcommand{\F}[1]{
{}_{2}F_{1}\left[ \left. 
\begin{matrix}
\frac{1}{3},  \frac{2}{3} \\
1
\end{matrix}
\right| #1
\right]
}
\renewcommand{\a}{\alpha}
\numberwithin{equation}{section}
\title{Hypergeometric expressions of $L$-values for a Borweins theta product of weight $3$}
\author{Ryojun Ito}
\address{Department of Mathematics and Informatics, Graduate School of Science, Chiba University, 
Yayoicho 1-33, Inage, Chiba, 263-8522 Japan. } 
\email{afua9032@chiba-u.jp} 
\subjclass{ 11F27, 11F67, 33C20, 33C70}       
\keywords{Borweins theta series, $L$-values for theta products, 
generalized hypergeometric functions, Kamp{\'e} de F{\'e}riet hypergeometric functions. }
\begin{document}

\begin{abstract}
In this paper, we consider a modular form of weight 3, which is a product of the Borweins theta series,  and 
express its $L$-values  at $s=1$,   $2$ and $3$ in terms of special values of Kamp{\'e} de F{\'e}riet hypergeometric functions,  which are two-variable generalization of generalized hypergeometric functions.
\end{abstract}
\maketitle

\section{Introduction and Main Results}
For a modular form $f$ of weight $k$ with $q$-expansion $f(q) = \sum_{n=0}^{\infty} a_{n}q^{n}$ ($q = e^{2\pi i\tau}$,  
$\mathrm{Im}(\tau) > 0$),   its $L$-function $L(f,s)$ is defined by 
\begin{align*}
L(f,  s) = \sum_{n=1}^{\infty} \frac{a_{n}}{n^{s}},  \hspace{5mm} \mathrm{Re}(s) > k+1.
\end{align*}
The function $L(f,  s)$ has meromorphic continuation to the whole complex plane with a possible simple pole at $s= k$ when the Fricke involution image $f^{\sharp}$ of $f$ is also a modular form.  
Furthermore,  if $f^{\sharp}(0) =0$,   then $L(f,s)$ is entire (cf. \cite{shimura}).  
In this paper,  we consider the case when $f(q)$ is a product of 
the Borweins theta series \cite{borweins,  bbg}
\begin{align*}
a(q) &:= \sum_{m,n \in \mathbb{Z}} q^{m^{2} + mn + n^{2}},   \\
b(q) &:= \sum_{m,n \in \mathbb{Z}} \omega^{m-n} q^{m^{2} + mn + n^{2}},   \\
c(q) &:= \sum_{m,n \in \mathbb{Z}} q^{\left( m + \frac{1}{3} \right)^{2} +\left( m + \frac{1}{3} \right) \left( n + \frac{1}{3} \right) + \left( n + \frac{1}{3} \right)^{2}},
\end{align*}
which are modular forms of weight $1$.  Here $\omega$ denotes a primitive cube root of unity.  
These are cubic analogues of the Jacobi theta series and satisfy the cubic identity \cite[(2.3)]{borweins}
\begin{align*}
a^{3}(q) = b^{3}(q) + c^{3}(q).   
\end{align*}

In 2010s,  it was proved that some $L$-values for certain modular forms  can be expressed in terms of  special values of 
generalized hypergeometric functions 
\begin{align*}
{}_{A+1}F_{A} \left[\left.
\begin{matrix}
a_{1},  a_{2},  \dots, a_{A+1} \\
a_{1}^{\prime},  \dots, a_{A}^{\prime}
\end{matrix}
\right| 
x
\right]
:= \sum_{n=0}^{\infty} \frac{ (a_{1})_{n} (a_{2})_{n} \cdots (a_{A+1})_{n}  }{ (a_{1}^{\prime})_{n} \cdots (a_{A}^{\prime})_{n}  } \frac{x^{n}}{(1)_{n}},
\end{align*}
and,  their two-variable generalization,  
Kamp{\'e} de F{\'e}riet hypergeometric functions \cite{appkampe, sk}
\begin{align*}
&F_{A^{\prime};B^{\prime};C^{\prime}}^{A;B;C}\left[\left.
\begin{matrix}
a_{1},  \dots, a_{A}  \\
a_{1}^{\prime} , \dots,  a_{A^{\prime}}^{\prime}
\end{matrix}
; 
\begin{matrix}
b_{1} , \dots,  b_{B} \\
b_{1}^{\prime},  \dots, b_{B^{\prime}}^{\prime}
\end{matrix}
;
\begin{matrix}
c_{1},  \dots, c_{C} \\
c_{1}^{\prime},  \dots, c_{C^{\prime}}^{\prime}
\end{matrix}
\right| x, y
\right] \\
&:= 
\sum_{m,n=0}^{\infty} 
\frac{ \prod_{i=1}^{A} (a_{i})_{m+n}  \prod_{i=1}^{B} (b_{i})_{m} \prod_{i=1}^{C} (c_{i})_{n} }{ \prod_{i=1}^{A^{\prime}} (a_{i}^{\prime})_{m+n} \prod_{i=1}^{B^{\prime}} (b_{i}^{\prime})_{m}  \prod_{i=1}^{C^{\prime}} (c_{i}^{\prime})_{n}} 
\frac{x^{m}y^{n}}{(1)_{m}(1)_{n}},  
\end{align*}
where $a_{i}$,  $a_{i}^{\prime}$,  $b_{i}$,  $b_{i}^{\prime}$,  $c_{i}$,  $c_{i}^{\prime}$ are complex parameters with $a_{i}^{\prime}$,  $b_{i}^{\prime}$,  $c_{i}^{\prime} \not\in \mathbb{Z}_{\leq 0}$,  and  $(a)_{n} := \Gamma (a+n) / \Gamma (a)$ denotes the Pochhammer symbol.
We list some  known cases.  
\begin{enumerate}
\item For some theta products $f(q)$ of weight 2,  Otsubo \cite{otsubo} expressed $L(f,   2)$ in terms of ${}_{3}F_{2}(1)$ via regulators.

\item Rogers \cite{rog} ,  Rogers-Zudilin \cite{rz},  Zudilin \cite{zud} and the author \cite{ito1} expressed $L(f,  2)$ for some theta products $f(q)$ of weight 2 in terms of ${}_{3}F_{2}(1)$ by  an analytic method.  
Furthermore,  for the Jacobi theta product which corresponds to the elliptic curve of conductor 32,  Zudilin \cite{zud} expressed $L(f,  3)$ in terms of ${}_{4}F_{3}(1)$.

\item For some quotients $f(q)$ of the Dedekind eta function $\eta (q) = q^{1/24} \prod_{n=1}^{\infty} (1-q^{n})$ of weight 3 (resp. 4,  5),  Rogers-Wan-Zucker \cite{rwz} expressed $L(f,  2)$ (resp.  $L(f,  3)$,  $L(f,  4)$) in terms of special values of the gamma function or generalized hypergeometric functions by an analytic method.  
The author \cite{ito2} expressed $L(f,  1)$ (hence the values $L(f^{\sharp},  2)$ by the functional equation) 
for some theta products $f(q)$ of weight 3 in terms of ${}_{3}F_{2} (1)$ by the Rogers-Zudilin method.

\item 
Samart \cite{sam} expressed $L(f,    3)$ for some eta quotients $f(q)$ of weight 3 in terms of ${}_{5}F_{4}(1)$ via Mahler measures.  

\item
The author \cite{ito3} expressed $L(f,   3)$ and $L(f,  4)$ for some Jacobi theta products $f(q)$ of weight 3 in terms of $F_{1:B:C}^{1:B+1:C+1} (1,1)$ by the Rogers-Zudilin method.

\item 
For certain binary theta series $f$ of odd weight $k \geqq 3$, Osburn and Straub \cite{os} expressed 
$L(f,  k-1)$ in terms of special values of the gamma function by an analytic method.    

\end{enumerate}

In this paper,  we consider the Borweins theta product of weight 3
\begin{align*}
f(q) := \frac{1}{3} b^{2}(q)c(q^{3}),  
\end{align*}
which satisfies the condition $f^{\sharp}(0) = 0$ (so $L(f,s)$ is entire),  and  express its $L$-values $L(f,  1)$,  $L(f,  2) $ and $L(f,3)$ in terms of special values of Kamp\'e de F\'eriet hypergeometric functions.

The main result is the following.

\begin{theo}
We have the following hypergeometric expressions:
\begin{align}
L(f,  1) &= \frac{1}{27} 
F_{1;1;1}^{1;2;2}\left[\left.
\begin{matrix}
1 \\
2
\end{matrix}
; 
\begin{matrix}
1, \frac{4}{3} \\
2
\end{matrix}
;
\begin{matrix}
\frac{1}{3}, \frac{2}{3} \\
1
\end{matrix}
\right| 1, 1
\right],  \label{main1} \\ 
L(f,  2) &= \frac{4\pi}{81\sqrt{3}} \left(
F_{1;1;1}^{1;2;2} \left[ \left. 
\begin{matrix}
1 \\
2
\end{matrix}
;
\begin{matrix}
1,  \frac{5}{3} \\
2
\end{matrix}
;
\begin{matrix}
\frac{1}{3}, \frac{2}{3} \\
1
\end{matrix}
\right|
1,1
\right] - 
F_{1;1;1}^{1;2;2} \left[ \left. 
\begin{matrix}
1 \\
2
\end{matrix}
;
\begin{matrix}
1,  \frac{4}{3} \\
2
\end{matrix}
;
\begin{matrix}
\frac{1}{3}, \frac{2}{3} \\
1
\end{matrix}
\right|
1,1
\right]
\right),   \label{main2}  \\ 
\begin{split}
L(f,  3) &=  \frac{2\pi^{2}}{27} \left( 
F_{1;1;1}^{1;2;2}\left[\left.
\begin{matrix}
\frac{1}{3} \\
\frac{4}{3}
\end{matrix}
; 
\begin{matrix}
\frac{1}{3},  1 \\
\frac{4}{3}
\end{matrix}
;
\begin{matrix}
\frac{1}{3}, \frac{2}{3} \\
1
\end{matrix}
\right| 1, 1
\right] 
- \frac{1}{4}
 F_{1;1;1}^{1;2;2}\left[\left.
\begin{matrix}
\frac{2}{3} \\
\frac{5}{3}
\end{matrix}
; 
\begin{matrix}
\frac{2}{3},  1 \\
\frac{5}{3}
\end{matrix}
;
\begin{matrix}
\frac{1}{3}, \frac{2}{3} \\
1
\end{matrix}
\right| 1, 1
\right]
\right.  \\
& \quad  \left. 
+ \frac{1}{27} F_{1;2;1}^{1;3;2}\left[\left.
\begin{matrix}
1 \\
2
\end{matrix}
; 
\begin{matrix}
1,  1,  \frac{4}{3} \\
2, 2
\end{matrix}
;
\begin{matrix}
\frac{1}{3}, \frac{2}{3} \\
1
\end{matrix}
\right| 1, 1
\right] 
- \frac{2}{27} F_{1;2;1}^{1;3;2}\left[\left.
\begin{matrix}
1 \\
2
\end{matrix}
; 
\begin{matrix}
1,  1,  \frac{5}{3} \\
2, 2
\end{matrix}
;
\begin{matrix}
\frac{1}{3}, \frac{2}{3} \\
1
\end{matrix}
\right| 1, 1
\right]
\right).
\end{split}  \label{main3}
\end{align}
\end{theo}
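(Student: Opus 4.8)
The plan is to realize $L(f,s)$ as a single period integral in the cubic modulus, and then to expand the integrand as a product of two one-variable hypergeometric series whose summation indices become the two indices of the Kampé de Fériet function; the coupling $1/(m+n+\mathrm{const})$ characteristic of the Rogers--Zudilin approach will be supplied by the final $x$-integration. First I would eliminate the argument $q^{3}$ using the Borwein relations $a(q)=a(q^{3})+2c(q^{3})$ and $b(q)=a(q^{3})-c(q^{3})$, which give $c(q^{3})=\tfrac13\bigl(a(q)-b(q)\bigr)$ and hence $f=\tfrac19\bigl(ab^{2}-b^{3}\bigr)$, a polynomial in the level-one series $a=a(q)$, $b=b(q)$. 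Introducing the cubic modulus $x=c^{3}(q)/a^{3}(q)$ and the Borwein parametrization $a=\F{x}$, $b=a(1-x)^{1/3}$ (so $b^{3}=a^{3}(1-x)$ by the cubic identity), this becomes
\[
f=\frac{\F{x}^{3}}{9}\,(1-x)^{2/3}\bigl(1-(1-x)^{1/3}\bigr).
\]

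Next I would insert this into the Mellin representation $L(f,s)=\frac{(2\pi)^{s}}{\Gamma(s)}\int_{0}^{\infty}f(it)\,t^{s-1}\,dt$, which is convergent and entire since $f^{\sharp}(0)=0$, and change the variable from $t$ to $x\in(0,1)$. On the imaginary axis $q=e^{-2\pi t}$, and the cubic theory supplies $t=\frac{1}{\sqrt3}\frac{\F{1-x}}{\F{x}}$ together with $q\,\frac{dx}{dq}=x(1-x)\F{x}^{2}$, the normalisation of the latter being fixed from the expansions $a\sim1$, $c\sim 3q^{1/3}$, $x\sim 27q$. After the powers of $\F{x}$ cancel and one uses $\frac{(1-x)^{-1/3}-1}{x}=\tfrac13\,{}_{2}F_{1}(1,\tfrac43;2;x)$, this yields the uniform representation
\[
L(f,s)=\frac{(2\pi)^{s}}{\Gamma(s)}\,\frac{3^{-(s-1)/2}}{54\pi}\int_{0}^{1}{}_{2}F_{1}(1,\tfrac43;2;x)\,\F{x}^{\,2-s}\,\F{1-x}^{\,s-1}\,dx,
\]
which reproduces the prefactors $\tfrac1{27}$, $\tfrac{2\pi}{27\sqrt3}$, $\tfrac{2\pi^{2}}{81}$ for $s=1,2,3$.

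For $s=1$ the factor $\F{1-x}^{s-1}$ disappears; expanding ${}_{2}F_{1}(1,\tfrac43;2;x)=\sum_{m}\frac{(4/3)_{m}}{(2)_{m}}x^{m}$ and $\F{x}=\sum_{n}\frac{(1/3)_{n}(2/3)_{n}}{(n!)^{2}}x^{n}$ and integrating termwise via $\int_{0}^{1}x^{m+n}\,dx=\frac1{m+n+1}$ gives precisely the single series \eqref{main1}. This is the template: every Kampé de Fériet function in the theorem is of the form $\int_{0}^{1}x^{\gamma}G(x)\,\F{x}\,dx$ for a one-variable hypergeometric $G$, the $x$-integration producing the coupling $1/(\gamma+m+n+1)$, and the identification of the resulting double sum with a Kampé de Fériet function is then routine.

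The real work is the factor $\F{x}^{2-s}\F{1-x}^{s-1}$ for $s=2,3$, which is not a single power series in $x$; here I would use the Gauss connection formulas. For $s=2$ one substitutes $x\mapsto 1-x$ and applies the non-logarithmic connection ${}_{2}F_{1}(1,\tfrac43;2;1-x)=\frac{3(x^{-1/3}-1)}{1-x}$ (valid since $c-a-b=-\tfrac13$), reducing $L(f,2)$ to a clean integral against $\F{x}$ that a hypergeometric transformation brings to the difference of two $F^{1;2;2}_{1;1;1}$ series in \eqref{main2}. The main obstacle is $s=3$, where the integrand carries $\F{1-x}^{2}/\F{x}$: because $\F{\cdot}={}_{2}F_{1}(\tfrac13,\tfrac23;1;\cdot)$ lies in the logarithmic case $c=a+b$, the substitution $x\mapsto1-x$ is barred (it would put $\F{1-x}$ in a denominator), so one must expand $\F{1-x}^{2}$ in place through the logarithmic connection formula, a polynomial of degree two in $\log x$ with digamma coefficients, while keeping $1/\F{x}$ as its (convergent) power series. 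The crux is then to integrate against $x^{\gamma}(\log x)^{j}$, using $\int_{0}^{1}x^{N}(\log x)^{j}\,dx=\frac{(-1)^{j}j!}{(N+1)^{j+1}}$, and to show that the $\log$- and digamma-contributions reorganise into the four terms of \eqref{main3}: the two parameter-shifted $F^{1;2;2}_{1;1;1}$ pieces coming from the $x^{-2/3}$ and $x^{-1/3}$ weights, and the two $F^{1;3;2}_{1;2;1}$ pieces coming from the second (Eichler) antiderivative, which enters because ${}_{3}F_{2}(1,1,b;2,2;x)=\frac1x\int_{0}^{x}{}_{2}F_{1}(1,b;2;t)\,dt$. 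Establishing this cancellation and reassembly is where I expect the genuine difficulty to lie.
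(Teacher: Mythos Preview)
Your treatment of $L(f,1)$ coincides with the paper's. The uniform period formula you write down is also correct.

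For $L(f,2)$ there is a gap. After $x\mapsto1-x$ you land on $\int_{0}^{1}\tfrac{3(x^{-1/3}-1)}{1-x}\,\F{x}\,dx$, and you then appeal to ``a hypergeometric transformation'' to reach \eqref{main2} without saying which. But this integrand does not fit the Euler template \eqref{integral} with the parameters $(a,a')=(1,2)$ that \eqref{main2} requires: it carries an $x^{-1/3}$ branch and a $(1-x)^{-1}$ pole, and the naive double expansion of $\tfrac{x^{-1/3}}{1-x}-\tfrac{1}{1-x}$ against $\F{x}$ diverges termwise. The paper does something different here: it applies the involution $b(e^{-2\pi u})=\tfrac{1}{\sqrt3\,u}\,c(e^{-2\pi/(3u)})$ to all three theta factors of $f$, turning $b^{2}(q)c(q^{3})$ into $c^{2}(q^{3})b(q)$ and absorbing the power of $t$ entirely. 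After reparametrising by the modulus of the \emph{new} $q$, the integrand becomes $((1-\alpha)^{-1/3}-1)^{2}/\alpha$, a genuine power series in $\alpha$ that is immediately $\tfrac{2}{3}\bigl({}_{2}F_{1}(1,\tfrac53;2;\alpha)-{}_{2}F_{1}(1,\tfrac43;2;\alpha)\bigr)$. Your substitution $x\mapsto1-x$ is only a change of integration variable, not this modular involution of $f$; it produces a numerically equal but structurally different integral from which \eqref{main2} does not follow by any routine identity.

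For $L(f,3)$ the proposed route cannot work. Your integrand contains $\F{x}^{-1}$, and the power-series coefficients of $1/\F{x}$ are not of Pochhammer type; multiplying them against $x^{\gamma}(\log x)^{j}$ and summing will never produce a Kamp\'e de F\'eriet series, whose second block in \eqref{main3} is visibly $(1/3)_{n}(2/3)_{n}/(1)_{n}$, i.e.\ the coefficients of $\F{x}$, not of its reciprocal. The paper bypasses this by the genuine Rogers--Zudilin trick, which you name but do not use: one inverts \emph{one} factor of $b$, expands $c(q)$ and $b(q)c(q^{3})$ as Lambert series, and rescales $u\mapsto su/k$ to swap the summation indices, thereby trading the factor $t^{2}$ for the modular form $c^{3}(q)$. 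A second involution then gives $L(f,3)=\tfrac{2\pi^{2}}{27}\int_{0}^{1}b^{3}(q)\,E_{0}(q)\,\tfrac{dq}{q}$ with $E_{0}(q)=\sum_{k,r\ge1}\tfrac{\chi_{-3}(kr)}{k}\bigl(q^{kr/3}-q^{kr}\bigr)$, and a separate lemma rewrites $E_{0}$ as the explicit combination of $\alpha^{1/3}{}_{2}F_{1}$, $\alpha^{2/3}{}_{2}F_{1}$ and $\alpha\,{}_{3}F_{2}$ that produces the four terms of \eqref{main3}. No negative power of $\F{x}$ ever enters.
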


Note that the double series $F_{A:B:C}^{A:B+1: C+1} (x,y)$ converges absolutely on $|x| \leq 1$ and $|y| \leq 1$ when the parameters satisfy the three conditions \cite{hms}
\begin{align*}
&\mathrm{Re}\left( \sum_{i=1}^{A} a_{i}^{\prime} + \sum_{i=1}^{B} b_{i}^{\prime} 
- \sum_{i=1}^{A} a_{i} - \sum_{i=1}^{B+1} b_{i}  \right) >0, \\
&\mathrm{Re}\left( \sum_{i=1}^{A} a_{i}^{\prime} + \sum_{i=1}^{C} c_{i}^{\prime} 
- \sum_{i=1}^{A} a_{i} - \sum_{i=1}^{C+1} c_{i}  \right) >0, \\
&\mathrm{Re}\left( \sum_{i=1}^{A} a_{i}^{\prime} + \sum_{i=1}^{B} b_{i}^{\prime} + \sum_{i=1}^{C} c_{i}^{\prime} 
- \sum_{i=1}^{A} a_{i} - \sum_{i=1}^{B+1} b_{i} - \sum_{i=1}^{C+1} c_{i} \right) >0.
\end{align*}

To prove the main result,  we use the Rogers-Zudilin method.   Its strategy is as follows.
We start with the Mellin transformation of $f(q)$:  For $n \in \mathbb{Z}_{\geq 1}$,  
\begin{align}
L(f,  n)  = \frac{(-1)^{n-1}}{3(n-1)!}  \int_{0}^{1} b^{2}(q) c(q^{3}) (\log q)^{n-1} \frac{dq}{q}. \label{mellin}
\end{align}
Set $\a = c^{3}(q) / a^{3}(q)$. Note that  we have $1 -\a = b^{3}(q) / a^{3}(q)$ by the cubic identity.
The key formulas to give a hypergeometric expression of $L(f,n)$ are  the following:  
\begin{align}
a(q) = \F{\a},    \hspace{6mm}
a^{2}(q) \frac{dq}{q} = \frac{d\a}{\a (1-\a)}.   \label{transformation}
\end{align}
The former is \cite[p.97,  (2.26)]{ramanujan5},  and the latter follows from the former and \cite[p.87,  Entry 30]{ramanujan2}.
By these transformation formulas and some computations,  we can reduce \eqref{mellin} to an integral of the form 
\begin{align*}
\int_{0}^{1} P(\a)
{}_{A+1}F_{A}  
\left[\left. 
\begin{matrix}
a_{1},  a_{2} ,  \dots,  a_{A+1} \\
a_{1}^{\prime} ,  \dots,  a_{A}^{\prime}
\end{matrix}
\right| \a
\right]
\F{\a}\frac{d\a}{\a (1-\a)}.  
\end{align*}
Here $P(\a)$ denotes a polynomial in $\a^{k}(1-\a)^{l}$ for various $k$ and $l$.
Then the formulas are obtained by the integral expression 
\begin{align}
\begin{split}
&\frac{\Gamma (a) \Gamma (a^{\prime} - a)}{\Gamma (a^{\prime})}
F_{1 ; B  ;  C}^{1 ; B + 1;C + 1}\left[\left.
\begin{matrix}
a \\
a^{\prime}
\end{matrix}
;
\begin{matrix}
b_{1},  \dots,  b_{B+1} \\
b_{1}^{\prime},  \dots,  b_{B}^{\prime}
\end{matrix}
; 
\begin{matrix}
c_{1},  \dots,  c_{C+1} \\
c_{1}^{\prime},  \dots, c_{C}^{\prime}
\end{matrix}
\right| x, y
\right] \\
&= \int_{0}^{1} t^{a} (1-t)^{a^{\prime}-a} {}_{B+1}F_{B} \left[\left. 
\begin{matrix}
b_{1},  \dots,  b_{B+1} \\
b_{1}^{\prime},  \dots,  b_{B}^{\prime}
\end{matrix}
\right|
xt
\right]
{}_{C+1}F_{C} \left[\left. 
\begin{matrix}
c_{1},  \dots,  c_{C+1} \\
c_{1}^{\prime},  \dots, c_{C}^{\prime}
\end{matrix}
\right|
yt
\right] \frac{dt}{t(1-t)},
\end{split}
\label{integral}
\end{align}
which  easily follows from the series expansion of ${}_{m+1}F_{m}(x)$ and termwise integration.

\section{Proof}

First,  we show \eqref{main1}.    
We have \cite[(2.1)]{bbg}
\begin{align}
c(q^{3}) =\frac{ a(q) - b(q)}{3},    \label{rel1}
\end{align}
hence 
\begin{align*}
L(f,  1) = \frac{1}{3} \int_{0}^{1} b^{2}(q)c(q^{3}) \frac{dq}{q} 
= \frac{1}{9} \int_{0}^{1} b^{2}(q) (a(q) - b(q)) \frac{dq}{q}. 
\end{align*}
By the transformation formulas \eqref{transformation},  the integral above becomes 
\begin{align*}
&\frac{1}{9} \int_{0}^{1} (1 - \a)^{\frac{2}{3}} \left( 1 - (1 - \a)^{\frac{1}{3}} \right) \F{\a} \frac{d\a}{\a(1-\a)} \\
&= \frac{1}{9} \int_{0}^{1} \left( (1-\a)^{-\frac{1}{3}} - 1 \right) \F{\a} \frac{d\a}{\a}.
\end{align*}
If we use  
\begin{align}
(1-x)^{-a} - 1 = ax {}_{2}F_{1} \left[ \left. 
\begin{matrix}
1,  a+1  \\
2
\end{matrix}
\right| x
\right],     \label{geom}
\end{align}
which follows from  $(1-x)^{-a} = {}_{1}F_{0} \left[\left.
\begin{matrix}
a \\
\hspace{1mm}
\end{matrix}
\right| x
\right]$,  then we obtain,  by \eqref{integral}, 
\begin{align*}
&\frac{1}{9} \int_{0}^{1} \left( (1-\a)^{-\frac{1}{3}} - 1 \right) \F{\a} \frac{d\a}{\a} \\
&=  \frac{1}{27} \int_{0}^{1}  \a (1- \a)  {}_{2}F_{1} \left[ \left. 
\begin{matrix}
1,  \frac{4}{3} \\
2
\end{matrix}
\right| 
\a \right] \F{\a} \frac{d\a}{\a (1- \a)}  \\
&=\frac{1}{27} F_{1;1;1}^{1;2;2}\left[\left.
\begin{matrix}
1 \\
2
\end{matrix}
; 
\begin{matrix}
1, \frac{4}{3} \\
2
\end{matrix}
;
\begin{matrix}
\frac{1}{3}, \frac{2}{3} \\
1
\end{matrix}
\right| 1, 1
\right].
\end{align*}

Next,  we show \eqref{main2}.  
By applying \eqref{mellin} to $n=2$ and  changing the variable $q = e^{-2\pi u}$,  we have
\begin{align*}
L(f, 2) =\frac{4\pi^{2}}{3} \int_{0}^{\infty} b^{2}(e^{-2\pi u}) c(e^{-6\pi u}) u du.
\end{align*}
If we use the involution formula
\begin{align}
b(e^{-2\pi u}) = \frac{1}{\sqrt{3} u} c(e^{ - \frac{2\pi}{3u}}),  \label{inv}
\end{align}
which follows from $b(q) = \eta^{3}(q) / \eta (q^{3})$,  $c(q) = 3\eta^{3}(q^{3})/ \eta (q)$ and an involution formula  of $\eta (q)$,     then we obtain 
\begin{align*}
L(f, 2) = \frac{4\pi^{2}}{27\sqrt{3}} \int_{0}^{\infty} c^{2} (e^{-\frac{2\pi}{3u}}) b(e^{- \frac{2\pi }{9 u}}) \frac{du}{u^{2}}. 
\end{align*}
By the variable transformations $u \mapsto 1/ u$,  $q = e^{-2\pi u}$ and $q \mapsto q^{9}$,  the integral above becomes 
\begin{align*}
\frac{2\pi}{3\sqrt{3}} \int_{0}^{1} c^{2} (q^{3}) b(q) \frac{dq}{q}.
\end{align*}
Applying \eqref{rel1} and  the transformation formulas \eqref{transformation},  we obtain 
\begin{align*}
L(f,  2) & = \frac{2\pi}{27\sqrt{3}} \int_{0}^{1} b(q) (a(q) - b(q))^{2} \frac{dq}{q} \\
&= \frac{2\pi}{27\sqrt{3}} \int_{0}^{1} (1 - \a)^{\frac{1}{3}} \left( 1 - (1-\a)^{\frac{1}{3}} \right)^{2} 
\F {\a} \frac{d\a}{\a (1-\a)}  \\
&=\frac{2\pi}{27\sqrt{3}} \int_{0}^{1} \left( (1-\a)^{-\frac{2}{3}} - 2 (1-\a)^{-\frac{1}{3}} + 1 \right) 
\F{\a} \frac{d\a}{\a}  .
\end{align*}
We have
\begin{align*}
(1-\a)^{-\frac{2}{3}} - 2 (1-\a)^{-\frac{1}{3}} + 1
= \frac{2}{3} \a \left( {}_{2}F_{1} \left[ \left.
\begin{matrix}
\frac{5}{3},  1 \\
2
\end{matrix}
\right| \a \right] 
- {}_{2}F_{1} \left[ \left.
\begin{matrix}
\frac{4}{3},  1 \\
2
\end{matrix}
\right| \a \right]  \right),  
\end{align*}
by \eqref{geom},  hence the formula follows from  \eqref{integral}.

Finally,  we prove \eqref{main3}.  
If we apply \eqref{mellin} to $n=3$ and change the variable $q = e^{-2\pi u}$,   we have
\begin{align*}
L(f,  3) &= \frac{4\pi^{3} }{3} \int_{0}^{\infty} b^{2}(e^{-2\pi u}) c(e^{-6\pi u}) u^{2} du   \\
&= \frac{4\pi^{3}}{3\sqrt{3}} \int_{0}^{\infty} b (e^{-2\pi u}) c(e^{-6\pi u}) \cdot c(e^{- \frac{2\pi}{3u}}) u du. 
\end{align*}
Here we used the involution formula \eqref{inv} for the last equality.  
We know the following Lambert series expansions \cite[Theorem 3.19,  (3.36)]{cooper} and \cite[(23)]{rz}:
\begin{align}
c(q) &= 3\sum_{r,  s =1}^{\infty} \chi_{-3} (r) \left( q^{\frac{rs}{3}} - q^{rs} \right),   \notag \\
b(q)c(q^{3}) &= 3 \sum_{n, k= 1}^{\infty} \chi_{-3} (nk) k q^{nk},   \label{wt2}
\end{align}
where $\chi_{-3}$ denotes the primitive Dirichlet character of conductor $3$.  
By these series expansions and  the variable transformation $u \mapsto su/ k$,  the integral above becomes 
\begin{align*}
4 \sqrt{3} \pi^{3} \int_{0}^{\infty} 
\left( \sum_{n,  s=1}^{\infty} \chi_{-3}(n) s^{2} e^{-2\pi u ns} \right)
\left( \sum_{k, r=1}^{\infty} \frac{\chi_{-3}(kr)}{k} \left(e^{-\frac{2\pi kr}{9u}} - e^{-\frac{2\pi kr }{3u}} \right) \right) udu.
\end{align*}
The first series is the Borweins theta product  \cite[Theorem 3.35]{cooper}:
\begin{align*}
c^{3}(q) = 27 \sum_{n,s=1}^{\infty} \chi_{-3}(n)s^{2} q^{ns},   
\end{align*}
which implies  
\begin{align*}
L(f, 3) &= \frac{4\pi^{3}}{9\sqrt{3}} \int_{0}^{\infty} 
c^{3}(e^{-2\pi u}) \sum_{k, r=1}^{\infty} \frac{\chi_{-3}(kr)}{k} \left(e^{-\frac{2\pi kr}{9u}} - e^{-\frac{2\pi kr }{3u}} 
 \right) udu .
\end{align*}
Using \eqref{inv} and  changing the variables $ u \mapsto 1/u$,  $q = e^{-2\pi u}$ and $q \mapsto q^{3}$,  we obtain 
\begin{align*}
L(f, 3) = \frac{2\pi^{2}}{27} \int_{0}^{1} b^{3}(q) 
\sum_{k, r=1}^{\infty} \frac{\chi_{-3}(kr)}{k} \left(q^{\frac{kr}{3}} - q^{kr} \right)  \frac{dq}{q}.
\end{align*}
By Lemma \ref{eisenstein} (below)  and the transformation formulas \eqref{transformation},  the integral above becomes  
\begin{align*}
&\frac{2\pi^{2}}{27} \int_{0}^{1} (1-\a) 
\left( 
\frac{1}{3} \a^{\frac{1}{3}}  {}_{2}F_{1} \left[ \left.
\begin{matrix}
\frac{1}{3},  1\\
\frac{4}{3}
\end{matrix}
\right| \a
\right] - \frac{1}{6} \a^{\frac{2}{3}}  {}_{2}F_{1} \left[ \left.
\begin{matrix}
\frac{2}{3},  1\\
\frac{5}{3}
\end{matrix}
\right| \a
\right] \right.  \\
& \hspace{10mm} \left.   + \frac{\a}{27}   {}_{3}F_{2} \left[ \left.
\begin{matrix}
1, 1,  \frac{4}{3}\\
2, 2
\end{matrix}
\right| \a
\right] - \frac{2\a}{27}   {}_{3}F_{2} \left[ \left.
\begin{matrix}
1, 1,  \frac{5}{3}\\
2, 2
\end{matrix}
\right| \a
\right]
\right) \F{\a } \frac{d \a}{\a (1- \a)} .
\end{align*}
Then the formula follows from \eqref{integral}.
\qed

\begin{lemm}\label{eisenstein}
\begin{align*}
\sum_{k, r=1}^{\infty} \frac{\chi_{-3}(kr)}{k} \left(q^{\frac{kr}{3}} - q^{kr} \right)
=& \frac{1}{3} \a^{\frac{1}{3}}  {}_{2}F_{1} \left[ \left.
\begin{matrix}
\frac{1}{3},  1\\
\frac{4}{3}
\end{matrix}
\right| \a
\right] - \frac{1}{6} \a^{\frac{2}{3}}  {}_{2}F_{1} \left[ \left.
\begin{matrix}
\frac{2}{3},  1\\
\frac{5}{3}
\end{matrix}
\right| \a
\right] \\
& + \frac{\a}{27}   {}_{3}F_{2} \left[ \left.
\begin{matrix}
1, 1,  \frac{4}{3}\\
2, 2
\end{matrix}
\right| \a
\right] - \frac{2\a}{27}   {}_{3}F_{2} \left[ \left.
\begin{matrix}
1, 1,  \frac{5}{3}\\
2, 2
\end{matrix}
\right| \a
\right].
\end{align*}
\end{lemm}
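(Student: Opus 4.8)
The plan is to prove the identity by passing to the variable $\a = c^{3}(q)/a^{3}(q)$: I will compute the logarithmic $q$-derivative of the left-hand side, rewrite it as an explicit function of $\a$, and then integrate back term by term using \eqref{geom}. Write $S(q)$ for the left-hand side. Since every summand of $S$ carries a positive power of $q$, we have $S\to 0$ as $q\to 0^{+}$, i.e. as $\a\to 0$. Hence, setting $\delta := q\,d/dq$, it suffices to compute $\delta S$, express it through $\a$ by means of the second formula in \eqref{transformation} written as $dq/q = d\a/(a^{2}(q)\,\a(1-\a))$, and recover $S=\int_{0}^{q}\delta S\,\frac{dt}{t}$ as an integral in $\a$ from $0$ to $\a$.

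Applying $\delta$ termwise gives
\[
\delta S = \frac{1}{3}\sum_{k,r\ge 1}\chi_{-3}(kr)\,r\,q^{kr/3} - \sum_{k,r\ge 1}\chi_{-3}(kr)\,r\,q^{kr}.
\]
By the symmetry $k\leftrightarrow r$, the Lambert series \eqref{wt2} identifies the second sum with $\tfrac{1}{3}b(q)c(q^{3})$, and the same identity applied after $q\mapsto q^{1/3}$ identifies the first with $\tfrac{1}{3}b(q^{1/3})c(q)$, so that $\delta S = \tfrac{1}{9}b(q^{1/3})c(q) - \tfrac{1}{3}b(q)c(q^{3})$. The crucial step is to eliminate the cube root: combining \eqref{rel1} with the companion Borwein relation $a(q) = a(q^{3})+2c(q^{3})$ yields $b(q) = a(q^{3})-c(q^{3})$, equivalently $b(q^{1/3}) = a(q)-c(q)$. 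Substituting this together with \eqref{rel1}, and then using $b(q)/a(q) = (1-\a)^{1/3}$ and $c(q)/a(q) = \a^{1/3}$, I obtain the clean formula
\[
\delta S = \frac{a^{2}(q)}{9}\left(\a^{1/3}-\a^{2/3}-(1-\a)^{1/3}+(1-\a)^{2/3}\right).
\]

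Finally, the factor $a^{2}(q)$ cancels against $d\a/(a^{2}(q)\,\a(1-\a))$, and integrating from $0$ to $\a$ reduces $S$ to four elementary integrals. The two terms in $\a^{1/3},\a^{2/3}$ integrate directly: using $(1/3)_{n}/(4/3)_{n} = 1/(3n+1)$ and $(2/3)_{n}/(5/3)_{n} = 2/(3n+2)$ they give $\tfrac{1}{3}\a^{1/3}\,{}_{2}F_{1}[\tfrac13,1;\tfrac43;\a]$ and $-\tfrac{1}{6}\a^{2/3}\,{}_{2}F_{1}[\tfrac23,1;\tfrac53;\a]$. The two terms in $(1-\a)^{1/3},(1-\a)^{2/3}$ are individually singular at $\a=0$ but their difference is not; applying \eqref{geom} rewrites the combined integrand as $\tfrac{1}{27}\,{}_{2}F_{1}[1,\tfrac43;2;\a]-\tfrac{2}{27}\,{}_{2}F_{1}[1,\tfrac53;2;\a]$, whose antiderivative (using $1/(n+1)=(1)_{n}/(2)_{n}$ to raise a lower parameter) is exactly the two ${}_{3}F_{2}(\a)$ terms of the claimed right-hand side. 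The only genuine obstacle is the middle step: recognizing the $q^{kr/3}$ part of $\delta S$ as $b(q^{1/3})c(q)$ and removing the cube root via $b(q^{1/3})=a(q)-c(q)$; once that algebraic identity is in hand, the remainder is routine termwise integration.
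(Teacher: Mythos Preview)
Your proof is correct and follows essentially the same route as the paper: differentiate the left-hand side with $q\,d/dq$, identify the result via \eqref{wt2} as $\tfrac19 b(q^{1/3})c(q)-\tfrac13 b(q)c(q^3)$, eliminate the cube root through $b(q^{1/3})=a(q)-c(q)$ and \eqref{rel1}, pass to the variable $\a$, and integrate back. The only cosmetic difference is in the final integration: the paper evaluates $\int_0^{\a} x^{1/3}\,dx/(x(1-x))$ and its companions by the substitution $x\mapsto \a x$ together with the Euler integral representation \eqref{hginterep}, whereas you expand in power series and integrate termwise using $(1/3)_n/(4/3)_n=1/(3n+1)$ and $(1)_n/(2)_n=1/(n+1)$; both are routine and yield the same hypergeometric expressions.
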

\begin{proof}
Denote the left hand side by $E_{0}(q)$.  Then,  by \eqref{wt2}, 
\begin{align*}
q \frac{d}{dq} E_{0} (q)
= \sum_{k, r =1}^{\infty} \chi_{-3}(kr) r  \left( \frac{1}{3} q^{\frac{kr}{3}} - q^{kr} \right)  
&= \frac{1}{9} b(q^{\frac{1}{3}}) c(q) - \frac{1}{3} b(q) c(q^{3}) \\
&= \frac{1}{9} \left( a(q)c(q) - c^{2}(q) - a(q)b(q) + b^{2}(q) \right) .
\end{align*}
Here,  for the last equality,   we used  \eqref{rel1} and 
\begin{align*}
b(q^{\frac{1}{3}}) = a(q) - c(q),
\end{align*}
which follows from  \cite[Lemma 2.1 (ii),  (iii)]{bbg}.
Hence,  by the transformation formulas  \eqref{transformation},  we have
\begin{align*}
E_{0}(q) &=\frac{1}{9} \int_{0}^{q} \left( a(q)c(q) - c^{2}(q) - a(q)b(q) + b^{2}(q) \right) \frac{dq}{q} \\
&= \frac{1}{9} \int_{0}^{\a} \left( \a^{\frac{1}{3}} - \a^{\frac{2}{3}} - (1-\a)^{\frac{1}{3}} 
+ (1 - \a)^{\frac{2}{3}} \right)  \frac{d\a}{\a (1- \a)} \\
&\left( = 
\frac{1}{9} \int_{0}^{\a} \left( x^{\frac{1}{3}} - x^{\frac{2}{3}} - (1-x)^{\frac{1}{3}} 
+ (1 - x)^{\frac{2}{3}} \right)  \frac{dx}{x (1- x)} \right).
\end{align*}
We divide the integral above into the three integrals
\begin{align}
&\int_{0}^{\a}  x^{\frac{1}{3}} \frac{dx}{x(1-x)},  \label{int1}  \\
&\int_{0}^{\a} x^{\frac{2}{3}} \frac{dx}{x(1-x)},  \label{int2} \\
&\int_{0}^{\a} \left( (1-x)^{\frac{1}{3}} - (1 - x)^{\frac{2}{3}} \right) \frac{dx}{x(1-x)}, \label{int3}
\end{align}
and show that each integral can be written as hypergeometric functions. 
First we compute \eqref{int1}.   If we change the variable $x \mapsto \a x$,  then 
\begin{align*}
\int_{0}^{\a} \frac{x^{\frac{1}{3}}}{x (1-x)} dx 
&= \a^{\frac{1}{3}} \int_{0}^{1} \frac{x^{\frac{1}{3}}}{x(1- \a x)} dx  \\
&= \a^{\frac{1}{3}} \int_{0}^{1} x^{\frac{1}{3}} (1-x) \left( 1 - \a x \right)^{-1} \frac{dx}{x(1-x)} \\
&= \a^{\frac{1}{3}}
\frac{\Gamma \left( \frac{1}{3} \right) \Gamma \left( 1 \right) }{\Gamma \left( \frac{4}{3} \right) }
{}_{2}F_{1} \left[ \left.
\begin{matrix}
\frac{1}{3}, 1 \\
\frac{4}{3}
\end{matrix}
\right| \a \right] 
= 3 \a^{\frac{1}{3}} {}_{2}F_{1} \left[ \left.
\begin{matrix}
\frac{1}{3}, 1 \\
\frac{4}{3}
\end{matrix}
\right| \a \right].
\end{align*}
Here we used the integral expression of generalized hypergeometric functions \cite[p.108,  (4.1.2)]{slater}
\begin{align}
\begin{split}
&\frac{\Gamma (a_{1}) \Gamma (a_{1}^{\prime} - a_{1})}{\Gamma (a_{1}^{\prime})} 
{}_{A+1}F_{A} \left[\left. 
\begin{matrix}
a_{1}, a_{2},  \dots, a_{A+1} \\
a_{1}^{\prime} ,  \dots,  a_{A}^{\prime}
\end{matrix}
\right|
z
\right]  \\
&= \int_{0}^{1} x^{a_{1}} (1-x)^{a_{1}^{\prime} - a_{1}} 
{}_{A}F_{A-1} \left[ \left. 
\begin{matrix}
a_{2},  \dots,  a_{A+1} \\
a_{2}^{\prime},  \dots,  a_{A}^{\prime}
\end{matrix}
\right| zx \right]
\frac{dx}{x(1-x)},
\end{split}  \label{hginterep}
\end{align}
for the last equality.  By similar computations,  one can show that \eqref{int2} coincides with 
\begin{align*}
\frac{3}{2} \a^{\frac{2}{3}} {}_{2}F_{1} \left[ \left.
\begin{matrix}
\frac{2}{3}, 1 \\
\frac{5}{3}
\end{matrix}
\right| \a \right].
\end{align*}
Finally,  by \eqref{geom},  the variable transformation $x \mapsto \a x$ and \eqref{hginterep},   the integral \eqref{int3} becomes 
\begin{align*}
&\int_{0}^{\a} \left( (1-x)^{-\frac{2}{3}} - (1-x)^{-\frac{1}{3}} \right) \frac{dx}{x} \\
&= \int_{0}^{\a} \left( \frac{2}{3} x {}_{2}F_{1} \left[ \left. 
\begin{matrix}
1,  \frac{5}{3} \\
2
\end{matrix}
\right| x \right]
- \frac{1}{3}  x {}_{2}F_{1} \left[ \left. 
\begin{matrix}
1,  \frac{4}{3} \\
2
\end{matrix}
\right| x \right]
\right) \frac{dx}{x} \\
&=\frac{\a}{3}  \int_{0}^{1} x(1-x) 
\left( 2 {}_{2}F_{1} \left[ \left. 
\begin{matrix}
1,  \frac{5}{3} \\
2
\end{matrix}
\right| \a x \right]
-  {}_{2}F_{1} \left[ \left. 
\begin{matrix}
1,  \frac{4}{3} \\
2
\end{matrix}
\right|  \a x \right]
\right)   \frac{dx}{x(1-x)} \\
&= \frac{2\a}{3}  {}_{3}F_{2} \left[ \left. 
\begin{matrix}
1,  1,  \frac{5}{3}\\
2, 2
\end{matrix}
\right| \a  \right]
- \frac{\a}{3} {}_{3}F_{2} \left[ \left. 
\begin{matrix}
1,  1,  \frac{4}{3} \\
2,  2
\end{matrix}
\right|  \a  \right].
\end{align*}
This proves the lemma.
\end{proof}

\section*{Acknowledgment}
I would like to thank Noriyuki Otsubo for helpful comments on a draft version of this paper. 
I also would like to thank Robert Osburn for letting me know the paper \cite{os}.

\end{document}